\newtheorem{thm}{Theorem}[section]
\newtheorem{lem}[thm]{Lemma}
\theoremstyle{definition}
\theoremstyle{remark}
\numberwithin{equation}{section}
\newcommand{\R}{\mathbb R}
\newcommand{\eps}{\varepsilon}
\newcommand{\li}{\langle}
\newcommand{\ri}{\rangle}
\newcommand{\rt}{\rightarrow}
\begin{document}

\title[rigidity theorems]{An elementary approach to\\ some rigidity theorems}
\author{harish seshadri}
\address{department of mathematics,
Indian Institute of Science, Bangalore 560012, India}
\email{harish@math.iisc.ernet.in}

\begin{abstract}

Using elementary comparison geometry, we prove:

Let $(M,g)$ be a simply-connected complete Riemannian manifold of
dimension $\ge 3$. Suppose that the sectional curvature $K$
satisfies
 \ $ -1-s(r) \le K \le -1$, \ where $r$ denotes distance to a fixed point in
 $M$. If \ $\lim_{ \ r \rt \infty} \ e^{2r}s(r) =0$, \ then $(M,g)$ has
to be isometric to ${\mathbb H}^n$.

The same proof also yields that if $K$ satisfies \ $-s(r) \le K
\le 0$ \ where \ $\lim_{r \rt \infty} r^2s(r)=0$, \ then $(M,g)$
is isometric to $\R^n$, \ a result due to Greene and Wu.


Our second result is a local one: Let $(M,g)$ be any Riemannian
manifold. For $a \in \R$, \ if \ $K \le a$ \ on a geodesic ball
$B_p(R)$ in $M$ and \ $K = a$ on \ $\partial B_p(R)$, \ then $K=
a $ on \ $B_p(R)$.

\end{abstract}

\thanks{Mathematics Subject Classification (1991): Primary 53C21, Secondary 53C20}

\maketitle
\section{Introduction}

The question of when a Riemannian manifold which asymptotically
``resembles" $\R^n$ is actually isometric to $\R^n$ is a
classical topic in differential geometry. Broadly speaking,
attention has been focused on two notions of resemblance. In the
first, one makes a weak curvature assumption, such as the
nonpositivity of scalar curvature, but one also assumes the
existence of a coordinate system (outside a compact set) in which
the metric approximates the standard Euclidean metric. The
Positive Mass Theorem of Schoen and Yau ~\cite{scy} is the
prototype of such a result.  In the second class, it is assumed
that the sectional curvature has a definite sign and approaches
zero at a certain rate. One of the early results in this
direction was by Siu and Yau ~\cite{sy}. One of the byproducts of
this paper is a completely elementary and short proof of the main
result in ~\cite{sy}. A host of theorems was also proved by
Greene and Wu in ~\cite{gw}. In particular, they proved:
\vspace{3mm}

\noindent {\bf Theorem 1 (Greene-Wu ~\cite{gw})}: {\it Let $(M,g)$
be a simply-connected complete Riemannian manifold of dimension
$\ge 3$. Suppose that \ $ -s(r) \ \le \ K \ \le \ 0$, where $r$
denotes distance to a fixed point in $M$.

If \ $\lim_{ \ r \rt \infty} \ r^2  s(r) =0$ when \ $dim \ M$ is
odd or $\int_0^\infty s(r)dr < \infty$ when \ $dim \ M$ is even, \
then $(M,g)$ is isometric to $\R^n$.} \vspace{3mm}

Results of both kinds have been extended to characterizing
hyperbolic manifolds. For instance, Min-Oo proved ~\cite{m} that a
spin $n$-manifold with scalar curvature $\ge -n(n-1)$ and
asymptotic to the hyperbolic metric in a strong sense must be
isometric to hyperbolic $n$-space. In the other direction, G.
Tian and Y. Shi recently proved ~\cite{ts}


\vspace{3mm}

 \noindent {\bf Theorem 2 (Tian-Shi ~\cite{ts})}:
\noindent {\it Let $(M,g)$ be a simply-connected complete
Riemannian manifold of dimension $\ge 3$ \ with \ $K \le 0$ and
$Ricci \ge  -(n-1)$.
If \ $\vert K+1 \vert = O(e^{-\alpha r})$ as $r \rt \infty$, for
some $\alpha
>2$, \ then $(M,g)$ is isometric to ${\mathbb H}^n$.}\vspace{3mm}

In this note, we prove two rigidity results by means of a simple
but versatile technique. The first result, Theorem A, is a direct
analogue of Theorem 1 above for characterizing hyperbolic space.
In fact, the same arguments will also give a quick proof of
Theorem 1 (For yet another proof of Theorem 1 involving the Tits
metric, see ~\cite{bgs}).

\vspace{3mm}

\noindent {\bf Theorem A:} \ {\it Let $(M,g)$ be a
simply-connected complete Riemannian manifold of dimension $\ge
3$. Suppose that \ $ -1-s(r) \ \le \ K \ \le \ -1$, where $r$
denotes distance to a fixed point in $M$. \vspace{1mm}

If \ $\lim_{ \ r \rt \infty} \ e^{2r}s(r) =0$, then $(M,g)$ is
isometric to ${\mathbb H}^n$.}

\vspace{3mm}

Theorem A complements Theorem 2 in the following sense: While
Theorem 2 implies rigidity for the {\it lower} bound $K \ge -1$,
\ Theorem A gives rigidity for the {\it upper} bound $ K \le -1$.
Similarly, in the case of $\R^n$,  \ some results for $K \ge 0 $
were also proved in ~\cite{gw}.\vspace{2mm}

The second result, Theorem B, applies to geodesic balls in
Riemannian manifolds. It is valid in the presence of positive
sectional curvature. If $(M,g)$ is a Riemannian manifold and $S$
any subset of $M$, we write ``{\it $K=a$ \ on \ $S$}" to mean the
following: For any $q \in S$ and any 2-plane $P \subset T_qM$,
one has $K(P)=a$, where $K$ is the sectional curvature of $g$.

\vspace{3mm}

\noindent {\bf Theorem B:} {\it Let $(M,g)$ be a Riemannian
manifold of dimension $\ge 3$. Suppose that $K \ \le \ a $ \ on \
$B_p(R)$. \ If \ $a > 0$, assume that \ $R  \le \ min \{ \frac
{\pi}{2 \sqrt a}, \ inj(p) \}$.
\vspace{1mm}

If \ $K = a$ on \ $\partial B_p(R)$, \ then $K= a $ on \
$B_p(R)$.} \vspace{3mm}

Note that when $a > 0$, we do not need to assume that sectional
curvature has a fixed sign in the interior of the geodesic ball.
Note also that we are not only demanding that the sectional
curvatures achieve their maxima on $\partial B_p(R)$ but also that
all curvatures are equal (to $a$) on \ $\partial B_p(R)$.
Finally, we remark that the above theorem fails to hold if we
assume that $K \ge a$ \ instead of \ $K \le a$. An example is
given in Section 3.

\vspace{2mm}




The proof of these theorems is based on relative volume comparison
for distance spheres. The upper curvature bound implies that the
relative volume of distance spheres is increasing and $\ge 1$. On
the other hand, this bound also gives lower bounds for the
principal curvatures of the distances spheres. Combining this
with the lower bound on $K$ one sees that the intrinsic curvature
of the distance spheres is bounded below. Another application of
volume comparison shows that the relative volume approaches $1$
as $r \rt \infty$ (in Theorem A). Hence the relative volume is
equal to $1$ for all $r$ and one gets the required conclusion.

\section{proofs}

 We begin by recalling two standard results of comparison
geometry. Let $(M,g)$ be a simply-connected Riemannian manifold,
not necessarily complete. For $p \in M$ and $r \le inj(p)$, \
let $V(r)$ and $A(r)$ denote the volumes of the ball $B_p(r)$ and
the sphere $S_p(r)= \partial B_p(r)$ in $M$ and let $V^a(r)$, \
$S^a(r)$ denote the corresponding quantities in the
simply-connected space-form of curvature $a$, respectively. Let
$\rho(x):=d(p,x)$.
\vspace{3mm}

\noindent {\bf Hessian Comparison}: {\it  Let $(M^n,g)$, \ $n \ge
2$, \ be a Riemannian manifold and $p \in M$. Assume that $ K \le
a$ \ on \ $B_p(R)$, where $R \in (0, \infty]$ \ if \ $a \le 0$ \
and \ $R \ \le \ min \{ \frac {\pi}{2 \sqrt a}, \ inj(p) \}$ \ if
\ $a
>0$. \vspace{3mm}

If \ $\lambda$ \ is any eigenvalue of \ $Hess(\rho) $, \ then \
$\lambda \ \ge \lambda_a$, where
\[ \lambda_a(r) =
\left\lbrace
  \begin{array}{c l}
    {\sqrt {\vert a \vert} } \ {\coth (\sqrt {\vert a \vert} r)} & \text{if $a<0$},\\
    {r^{-1}} & \text{if $a=0$},\\
    {\sqrt a} \ {\cot (\sqrt a r)} & \text{if $a>0$}

  \end{array}
\right. \]}

 \vspace{3mm}

\noindent {\bf Volume Comparison}: {\it Let $(M^n,g)$, \ $n \ge
2$, \ be a Riemannian manifold and $p \in M$. \vspace{2mm}

(i) \ Let \ $r \le R \le inj(p)$.  \ If \ $ K \le a$ \  on \
$B_p(R)$, \ then  \ $\frac {A(r)} {A_a(r)} $ \
is an increasing function of \ $r$.
\ If \ $\lim_{r \rt R} \frac {A(r)} {A_a(r)}=1 $, \
then \ $K=a$ \ on \ $B_p(R)$. \vspace{2mm}

(ii) \ If \ $Ricci \ \ge \ (n-1)a$, \ then \ $V(r) \le V_a(r)$ for
any \ $r >0$.}

\vspace{6mm}

The proof begins with the  following linear algebra lemma:

\begin{lem}\label{sym}
Let $S: V \rt V$ be a positive semi-definite symmetric linear
operator on an inner-product space $V$. Let $$T(X,Y):=\frac { \li
S(X),X \ri \ \li S(Y),Y \ri -  \li S(X),Y \ri ^2}{\Vert X \Vert
^2 \Vert Y \Vert ^2 - \li X,Y \ri ^2}$$ for linearly independent
vectors $X$ and $Y$. Then
$$ \lambda ^2 \ \le \ T(X,Y) \ \le \ \mu^2,$$ where $\lambda$ and $\mu$ are the smallest and largest
eigenvalues of $S$, respectively.
\end{lem}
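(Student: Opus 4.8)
The plan is to diagonalize $S$ and reduce the problem to an inequality about a $2\times 2$ Gram-type determinant. Let $e_1,\dots,e_n$ be an orthonormal eigenbasis of $S$ with eigenvalues $0\le\lambda=\alpha_1\le\alpha_2\le\cdots\le\alpha_n=\mu$. After applying an orthogonal change of coordinates (which changes neither $T$ nor the eigenvalues) I may assume $S=\mathrm{diag}(\alpha_1,\dots,\alpha_n)$. Then for $X=(x_i)$, $Y=(y_i)$ the numerator of $T(X,Y)$ is
\[
\Big(\sum_i \alpha_i x_i^2\Big)\Big(\sum_j \alpha_j y_j^2\Big)-\Big(\sum_i \alpha_i x_i y_i\Big)^2,
\]
and by the standard Lagrange identity this equals $\sum_{i<j}\alpha_i\alpha_j (x_i y_j - x_j y_i)^2$. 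Similarly the denominator $\Vert X\Vert^2\Vert Y\Vert^2-\li X,Y\ri^2$ equals $\sum_{i<j}(x_i y_j - x_j y_i)^2$, which is strictly positive since $X,Y$ are linearly independent.

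Writing $w_{ij}:=(x_i y_j - x_j y_i)^2\ge 0$, we therefore have
\[
T(X,Y)=\frac{\sum_{i<j}\alpha_i\alpha_j\, w_{ij}}{\sum_{i<j} w_{ij}},
\]
a weighted average of the numbers $\alpha_i\alpha_j$ over pairs $i<j$. Since $0\le\lambda\le\alpha_i\le\mu$ for every $i$, each product satisfies $\lambda^2\le\alpha_i\alpha_j\le\mu^2$, and a weighted average (with nonnegative weights, not all zero) of numbers lying in $[\lambda^2,\mu^2]$ again lies in $[\lambda^2,\mu^2]$. This gives $\lambda^2\le T(X,Y)\le\mu^2$, as claimed.

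The only real point requiring care — the "main obstacle," though it is minor — is verifying the two Lagrange-type identities and confirming that the denominator is genuinely nonzero, i.e. that linear independence of $X$ and $Y$ forces at least one $w_{ij}>0$; this is immediate since $w_{ij}=0$ for all $i<j$ means all $2\times 2$ minors of the matrix with rows $X,Y$ vanish, hence $X,Y$ are proportional. Positive semi-definiteness of $S$ is used only to ensure $\lambda\ge 0$, so that $\lambda^2$ (rather than $0$) is the correct lower bound for the products $\alpha_i\alpha_j$; without it one would only get $T\ge 0$. No completeness or dimension hypotheses enter, and the argument is purely algebraic.
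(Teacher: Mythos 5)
Your proof is correct, and it takes a genuinely different route from the paper's. The paper works entirely inside the $2$-plane $P=\mathrm{Span}\{X,Y\}$: it observes that $T$ depends only on $P$, then produces orthogonal vectors $v,w\in P$ with $\langle S(v),w\rangle=0$ by solving a quadratic equation (i.e.\ it simultaneously diagonalizes the inner product and the quadratic form $\langle S\cdot,\cdot\rangle$ restricted to $P$); with such a basis $T(v,w)$ becomes the product of the two Rayleigh quotients $\langle S(v),v\rangle/\Vert v\Vert^2$ and $\langle S(w),w\rangle/\Vert w\Vert^2$, each lying in $[\lambda,\mu]$, and positive semi-definiteness then gives $\lambda^2\le T\le\mu^2$. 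You instead diagonalize $S$ on all of $V$ and use the Lagrange identity to write $T$ as a weighted average of the products $\alpha_i\alpha_j$ over pairs $i<j$; both identities and the nonvanishing of the denominator are verified correctly, and positive semi-definiteness enters exactly where you say it does. Your route avoids the paper's case $\langle S(e_1),e_2\rangle=0$ and the quadratic-equation step, and it actually proves something slightly stronger: $T$ lies between the product of the two smallest and the product of the two largest eigenvalues, which is sharper than $\lambda^2\le T\le\mu^2$. The only trade-off is the implicit assumption that $V$ is finite dimensional (so that an orthonormal eigenbasis exists), which is harmless here since the lemma already presupposes extremal eigenvalues and is applied to tangent spaces, whereas the paper's in-plane argument is manifestly dimension-free.
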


\begin{proof}
It can be checked that the $T(X,Y)$ depends only on the plane
spanned by $X$ and $Y$, i.e., $T(X,Y)=T(X',Y')$ if $X', Y'$ is
another basis for $ \ P=Span \{X,Y \}$.

  We claim that we can find orthogonal vectors \ $v,w \in P$ \ such
that $<S(v),w>=0$. This will clearly prove the lemma. In fact, let
$e_1, \ e_2$ be an orthonormal basis for $P$. We can assume that
$<S(e_1),e_2> \neq 0$, otherwise there is nothing to prove. Let
$v=e_1+ce_2, \ w=e_1 + de_2$, where $c,d$ are to be chosen. We
want
$$ <v,w>=1+cd=0, \ \ <S(v),w>= a_{22}cd + (c+d)a_{12}+a_{11}=0,$$
where $a_{ij}=<S(e_i),e_j>$. These equation give
$$ c^2 + (\frac {a_{11} -a_{22}}{a_{12}})c-1=0,$$
which can be solved to give the required $v$ and $w$.

\end{proof}
\vspace{2mm}

The main ingredient in our proof is the following \vspace{2mm}

\noindent {\bf Key Lemma:} {\it Let $(M^n,g)$, \ $n \ge 3$, \ be a
Riemannian manifold. Suppose that \ $ a-s(r) \le K \le a$ \ on \
$B_p(R)$, \ where $R \in (0, \infty]$ \ if \ $a \le 0$ \ and \ $R
\ \le \ min \{ \frac {\pi}{2 \sqrt a}, \ inj(p) \}$ \ if \ $a
>0$. Then
$$\frac {A(r)}{A_{a}(r)} \
\le \ (1-f_a(r)^2s(r))^{- \frac {n-1}{2}},$$ for all \ $r$ \ such
that \ $1-f_a(r)^2s(r) \ > \ 0$. \ Here
\[ f_a(r) =
\left\lbrace
  \begin{array}{c l}
    \frac {1}{\sqrt {\vert a \vert} } \sinh (\sqrt {\vert a \vert} r) & \text{if $a<0$},\\
    r & \text{if $a=0$},\\
    \tfrac {1}{\sqrt a} \sin (\sqrt a r) & \text{if $a>0$},\\
  \end{array}
\right. \]} \vspace{2mm}

\begin{proof}
Let $\omega_n$ denote the volume of the unit sphere in $\R^n$. We
then have
\begin{equation}\label{on}
A^a(r)= f_a(r)^{n-1} \omega_n.
\end{equation}

We first consider the $a <0$ case.
By the Gauss-Codazzi equations for the curvature of the
submanifold $S_p(r)$ at a point $q \in S_p(r)$,
\begin{equation}\label{gc}
\tilde K (P) \ = \ K(P) \ + \ \li S(X),X \ri  \ \li S(Y),Y \ri -
\li S(X),Y \ri ^2,
\end{equation}
for any 2-plane $P \subset T_q S_p(r)$. Here $\{X, Y\}$ is any
orthonormal basis of $P$. \vspace{2mm}

Now, since $<S(X),Y> \ = \ Hess(\rho)(X,Y)$, we can apply the
Hessian comparison theorem to estimate the eigenvalues of $S$.
Combining these estimates with Lemma ~\ref{sym} and using the
condition $K \ge a-s(r)$ in (~\ref{gc}) we get
$$\tilde K \ \ge \  a-s(r)  \ + \  {\vert a \vert} \ \coth (\sqrt {\vert a \vert}r)^2.$$
This implies that
\begin{equation}\label{sec}
f_a(r)^2 \tilde K \ \ge \ k(r):= 1 \ - {\vert a
\vert}^{-1}\sinh(\sqrt {\vert a \vert}r)^2 s(r).
\end{equation}

Let us fix an $r$ with $k(r) >0$.
\ By (~\ref{sec}), we can apply the Bonnet-Myers theorem to the
Riemannian manifold $(N,h)= (S_p(r), \ f_a(r)^{-2}g)$ to get
$$ diam \ N \ \le \  {\pi} k(r)^{- \frac {1}{2}}.$$

From (ii) of the volume comparison theorem, we have: Volume of $N
=$ Volume of \ $B^N_p({\pi}k(r)^{- \frac {1}{2}})  \ \ \le \
k(r)^{- \frac {n-1}{2}} \ \omega_n$. \ Here $B^N$ denotes balls in
$(N,h)$.

Since Volume of $N=f_a(r)^{-(n-1)} \ A(r)$, we have \ $A(r) \ \le
\ f_a(r)^{n-1} k(r)^{- \frac {n-1}{2}} \ \omega_n $. \ Combining
this with (~\ref{on}) gives the required inequality. \vspace{3mm}

The proof goes through without any changes for \ $a=0$. When $a
>0$, \  note that $\lambda \ge \lambda_a \ge 0$ \ as long as \ $r \le
min \{ \frac {\pi}{2 a}, \ inj(p) \}$. Hence Lemma ~\ref{sym} can
be applied and the rest of the proof goes through.
\end{proof} \vspace{3mm}

\noindent {\it Proof (of Theorems A and B)}: We start with the
proof of Theorem A.

Since $K \le -1$, by the volume comparison theorem, the ratio $
F(r)=\frac {A(r)}{A_{-1}(r)} \ge 1$ is a non-decreasing function
of $r$. By the Key Lemma and the hypothesis that $e^{2r} s(r) \rt
0 $ as $r \rt \infty$, \ we see that $\lim_{r \rt \infty} F(r) \le
1$. Hence $F(r)=1$ for all $r
>0$. By the equality part of the volume comparison theorem we
obtain $K=-1$ \ on \ $M$.

The proof of Theorem B is similar. In this case the function
$F(r)=\frac {A(r)}{A_{a}(r)} \ge 1$ is increasing for $r \le R$.
\ Since $K=a$ on $\partial B_p(r)$,  we have \ $K \ge a-s(r)$ \
where \ $s(r) \rt 0$ \ as \ $r \rt R$. \ Combining this with the
Key Lemma and arguing as before, we get $K=a$ \ for \ $r \le R$.

\hfill $ \square $

\section{remarks}
The remarks below concern the validity of the theorems under lower
bounds on $K$. \vspace{2mm}

(i) As mentioned earlier, an analogue of Theorem A for the bounds
\ $-1 \ \le \ K \ \le \ -1+Ce^{-\alpha r} \le 0$ \ with \ $C>0, \
\alpha >2$ is implied by the result in ~\cite{ts}. \vspace{2mm}

%
%

(ii) Theorem B is no longer true under the bound \ $K \ge a$.
Indeed, consider the metric \ $g=dr^2 + f(r)^2 g_0$ \ on the ball
$D= \{ x \ : \ r =\Vert x \Vert < \frac {\pi}{2} \}$ in $\R^n$,
where $g_0$ is the standard round metric on $S^{n-1}$ and
\[ f(r) = \left\lbrace
  \begin{array}{c l}
    {\sin (r)} & \text{if \ $r < c - \eps$},\\
    {h(r)} & \text{if \ $c - \eps \ \le \ r  \ \le \ c + \eps$},\\
    {-r+ \frac {\pi}{2}} & \text{if \ $c+ \eps < \ r \ < \frac {\pi}{2}$.}

  \end{array}
\right. \] Here $h$, $\eps$ and $c$ are to be chosen. Let $c \in
(0,\frac {\pi}{2})$ be the solution to \ $\sin(r)\ = \ -r +\frac
{\pi}{2}$. \ Choose $\eps$ so that $[c- \eps,c+ \eps] \subset
(0,\frac {\pi}{2})$.

Let $h:[c-\eps,c+ \eps] \rt (0,\infty)$ be a smooth function with
$$h'' \le 0 \ \ {\rm and} \ \ -1 \le h'  \le 1$$ which agrees (up to second order) with
$\sin(r)$ at $c-\eps$ and with $-r+\frac {\pi}{2}$ at $c+\eps$.
Since the sectional curvatures of the metric $g=dr^2+f(r)^2g_0$
lie between the values of
$$ - \frac {f''(r)}{f(r)} \ \ {\rm and} \ \ \frac
{1-f'(r)^2}{f(r)^2},$$ we see that $g$ has $K \ge 0$ everywhere
and $K=0$ on $\partial B_0(c+\eps)$. On the other hand, $K=1$ on
$B_0(c- \eps)$. \ Hence Theorem  B fails to hold.\vspace{2mm}


\begin{thebibliography}{10}

\bibitem{bgs} W. Ballmann, M. Gromov, V. Schroeder
\textit{Manifolds of nonpositive curvature}, Progress in
Mathematics, \textbf{61}. Birkh\"auser Boston, Inc., Boston, MA,
1985.




\bibitem{gw} R. E. Greene, H. Wu
\textit{Gap theorems for noncompact Riemannian manifolds}, Duke
Math. J. \textbf{49} (1982), no. 3, 731-756.


\bibitem{m} M. Min-oo
\textit{Scalar curvature rigidity of asymptotically hyperbolic
spin manifolds}, Math. Ann. \textbf{285} (1989), no. 4, 527-539.


\bibitem{scy} R. Schoen, S-T. Yau
\textit{On the proof of the positive mass conjecture in general
relativity}, Comm. Math. Phys. \textbf{65} (1979), no. 1, 45-76.

\bibitem{sy} Y. T. Siu, S-T. Yau,
\textit{Complete K\"ahler manifolds with nonpositive curvature of
faster than quadratic decay}, Ann. of Math. (2) \textbf{105}
(1977), no. 2, 225--264.

\bibitem{ts} G. Tian, Y. Shi
\textit{Rigidity of asymptotically hyperbolic manifolds}, Comm.
Math. Phys. \textbf{259} (2005), no. 3, 545-559.

\end{thebibliography}
\end{document}